\newcommand{\Var}{{\cal{V}_{\mathbf{k}}}}
\newcommand{\VarC}{{\cal{V}_{\mathbb{C}}}}
\newcommand{\St}{{\rm Stck}_{\mathbf{k}}}
\newcommand{\Zar}{{\cal{Z}}_{\mathbf{k}}}
\def\1{\underline{1}}
\def\AA{{\mathbb A}}
\def\P{\mathbb P}
\def\bL{{\mathbb L}}
\def\Z{{\mathbb Z}}
\def\Q{{\mathbb Q}}
\def\C{{\mathbb C}}
\def\Gr{\mbox{\bf Gr}}
\newtheorem{proposition}{Proposition}
\newenvironment{definition}
{\smallskip\noindent{\bf Definition\/}:}{\smallskip\par}
\newenvironment{corollary}
{\smallskip\noindent{\bf Corollary\/}.}{\smallskip\par}
\newenvironment{example}
{\smallskip\noindent{\bf Example\/}.}{\smallskip\par}
\newenvironment{remark}
{\smallskip\noindent{\bf Remark\/}.}{\smallskip\par}
\newenvironment{proof}
{\noindent{\bf Proof\/}.}{{ $\square$}\smallskip\par}
\title{On the pre-$\lambda$ ring structure on the Grothendieck ring of stacks and the power structures
 over it.
\footnote{Math. Subject Class.: 14C05, 14G10}
}
\author{S.M.~Gusein-Zade \thanks{Partially supported by the grants
RFBR-10-01-00678,
NSh-8462.2010.1 and by the research program 
of the Universidad Complutense-Grupo Santander.
Address: Moscow State University, Faculty
of Mathematics and Mechanics, Moscow, GSP-1, 119991, Russia. E-mail:
sabir\symbol{'100}mccme.ru} \and I.~Luengo \thanks{The last two authors are partially
supported by the grants MTM2010-21740-C02-01 and Grupo Singular CCG07-UCM/ESP-2695-921020. Address: University
Complutense de Madrid, Dept. of Algebra, Madrid, 28040, Spain.
E-mail: iluengo\symbol{'100}mat.ucm.es, amelle\symbol{'100}mat.ucm.es} \and
A.~Melle--Hern\'andez \thanks{Address: Instituto de Ciencias Matem\'aticas 
Complutense-CSIC-Aut\'onoma-Carlos III, Spain.
}}
\date{}
\begin{document}

\maketitle

\begin{abstract}
{\sloppy We discuss a generalization (``extension'') of the pre-$\lambda$ structure on the Gro\-then\-dieck ring of
quasi-projective varieties
and of the corresponding power structure over it to the Grothendieck ring of stacks, discuss
some of their properties and give some explicit formulae for the Kapranov zeta function for some stacks.}
\end{abstract}

\section{Introduction}
Let $K_0(\Var)$ be the Grothendieck ring of  quasi-projective
varieties over an algebraically closed field ${\mathbf{k}}$. This is the Abelian group generated by the classes $[X]$ of all 
quasi-projective ${\mathbf{k}}$-varieties $X$ modulo the relations:
\begin{enumerate}
\item[1)] $[X]=[Y]$ for isomorphic $X$ and $Y$;
\item[2)] $[X]=[Y]+[X\setminus Y]$ when $Y$ is a Zariski closed subvariety of $X$.
\end{enumerate}
The multiplication in $K_0(\Var)$ is defined by the Cartesian product
of varieties: $[X_1]\cdot [X_2]=[X_1\times X_2]$.
The class $[\AA^1_{\mathbf{k}}]\in K_0(\Var)$ of the  affine line is
denoted by $\bL$. 

In \cite{GLM04}, there was defined a notion of a \emph{power structure} over a ring and there was 
described a natural power structure over
the Grothendieck ring $K_0(\VarC)$ of complex quasi-projective
varieties. This means that, for a series $A(T)=1+a_1T+a_2T^2+\ldots \in 1+T \cdot K_0(\VarC)[[T]]$
and for an element $m\in K_0(\VarC)$,
one defines a series $(A(T))^m\in 1+ T \cdot K_0(\VarC)[[T]]$ 
so that all the usual properties of the exponential function hold.
Here we assume the base field ${\mathbf{k}}$ to be an arbitrary algebraically closed field
and therefore  we indicate changes required for that.

A special property of this power structure which is important for applications is its \emph{effectiveness}. This means that if all the coefficients $a_i$ of the series $A(T)$ 
and the exponent $m$ are represented by classes of quasi-projective varieties (i.e. $a_i=[A_i]$ and $m=[M]$),
then all the coefficients of the series $A(T)^m$ are represented by classes of quasi-projective varieties
too, not by classes of virtual varieties (see e.g. \cite{GLM06}, \cite{GLM07}). 
This property looks somewhat misterious. 
A power structure over a ring $R$ can be defined by a \emph{pre-$\lambda$ structure} on $R$. The 
pre-$\lambda$ structure on the Grothendieck ring $K_0(\VarC)$ which induces the power structure
described in \cite{GLM04} is defined by the Kapranov zeta-function introduced  in \cite{Kap}:
\begin{equation} \label{Kapranov}
\zeta_{X}(T):=1+[S^1 X]\cdot T+[S^2 X]\cdot T^2+[S^3 X]\cdot T^3+\ldots\,,
\end{equation}
where $S^k X:=X^k/S_k$ is the $k$-th-symmetric power of the variety $X$.
The use of another (natural)  pre-$\lambda$ structure on the Grothendieck ring $K_0(\VarC)$ (opposite to the one defined by (\ref{Kapranov})) 
induces a power structure over it which is not effective.

For a field ${\mathbf{k}}$ of positive characteristic the Kapranov zeta-function (\ref{Kapranov}) also
defines a pre-$\lambda$ structure on the Grothendieck ring $K_0(\Var)$ (see \cite[Proposition 1.1 i)]{Ek:09-2})
and therefore a power structure over it. However in \cite{Ek:09-2} it is explained that (over a field of positive
characteristic) one should consider the following, a little bit different,  pre-$\lambda$ structure on  $K_0(\Var)$.
 
Let $X^k=\coprod\limits_{\{k_i\}} X^k_{\{k_i\}}$ be the natural representation
of the Cartesian power $X^k$
of $X$ as the union of the strata corresponding to all the partitions of $k$.
The stratum corresponding to the partition  
$\{k_i\}:\ \sum i k_i=k$, consists of the $k$-tuples $(x_1,\ldots,x_k)$ with
(exactly) $k_i$ groups with (exactly) $i$ equal elements. 
For the partition $\{k_i\}$, let $\prod\limits_{i} X^{k_i}\setminus \Delta$
be the complement of the large diagonal
which consist of $\sum\limits_{i}k_i$-tuples of points of $X$ with at
least two coinciding ones. 
The group $\prod\limits_{i} S_{k_i}$ acts freely on
$\prod\limits_{i} X^{k_i}\setminus \Delta$.
One has $X^k_{\{k_i\}}/S_{k}\cong (\prod\limits_{i}
X^{k_i}\setminus \Delta)/\prod\limits_{i} S_{k_i}$.
One should use  $[S^k X]^*=\sum\limits_{\{k_i\}} 
[(\prod\limits_{i} X^{k_i})\setminus \Delta)/\prod\limits_{i} S_{k_i}]$
instead of $[S^k X]$ in the definition (\ref{Kapranov}).
For the base field ${\mathbf{k}}$ of characteristic zero
the classes  $[S^k X]$ and  $[S^k X]^*$ coincide. Moreover one can see that
the geometric definition of the power structure in \cite{GLM04}, being
literally extended to a field of an arbitrary characteristic, corresponds,
in fact, to this modification of the Kapranov zeta-function.
Indeed in 
\cite{GLM04}, the series $\left(A(T)\right)^{[X]}$ for $A(T)=1+\sum\limits_{i=1}^\infty  [A_i]\,T^i$ where $[A_i]$ are 
quasiprojective varieties and  for a quasi-projective variety 
$X$ is defined as
\begin{equation} \label{eq00}
\left(A(T)\right)^{[X]}
:=1+\sum_{k=1}^\infty
\left
\{\sum_{\{k_i\}:\sum ik_i=k}
\left[
\left((
(\prod_i X^{k_i}
)
\setminus\Delta)
\times\prod_i A_i^{k_i}\right)/\prod_iS_{k_i}
\right]
\right\}
\cdot T^k,
\end{equation}
where the group $S_{k_i}$  acts by permuting corresponding $k_i$ factors in
$\prod\limits_i X^{k_i}\supset (\prod_i X^{k_i})\setminus\Delta$
and the spaces $A_i$ simultaneously.
If all $A_i$ are points (i.e. $[A_i]=1$), the coeficient at $T^k$ in 
$(1+T+T^2+\ldots)^{[X]}=(1-T)^{-[X]}$ is just   
$\sum\limits_{\{k_i\}} 
[(\prod\limits_{i} X^{k_i}\setminus \Delta)/\prod\limits_{i} S_{k_i}]=[S^k X]^*$.

There are natural generalizations of the pre-$\lambda$ structure on the Grothendieck ring $K_0(\Var)$ 
to other situations, say to  the Grothendieck ring $K_0(\St)$ of algebraic stacks of finite type over ${\mathbf{k}}$
all of whose automorphism group schemes are affine. Though the corresponding 
pre-$\lambda$ structure on  the Grothendieck ring $K_0(\St)$ is effective (i.e. all the coeficients 
of the series $\zeta_{X}(T)$ are represented by effective stacks: see below) the corresponding 
power structure fails to be effective.

In this paper we describe a generalization (``extension'') of the pre-$\lambda$ structure on the ring $K_0(\Var)$
and of the corresponding power structure over it to the  Grothendieck ring $K_0(\St)$ of stacks, discuss
some of their properties and give some explicit formulae for the Kapranov zeta function for some stacks.

\section{Kapranov zeta function for some stacks}
A \emph{pre-$\lambda$ structure} on a ring $R$ is given by a series
$\lambda_a(T)\in 1+T\cdot R[[T]]$ defined for each $a\in R$ so that
\begin{enumerate}
\item[1)] $\lambda_a(T)=1+aT\, \mbox{ mod }T^2$,
\item[2)] $\lambda_{a+b}(T)=\lambda_a(T)\lambda_b(T)$ for $a,b\in R$.
\end{enumerate}

A natural 
pre-$\lambda$ structure on the Grothendieck ring $K_0(\Var)$ of quasi-projective varieties 
is defined by the Kapranov zeta-function:
$$
\zeta_{X}(T):=1+[S^1 X]\cdot T+[S^2 X]\cdot T^2+[S^3 X]\cdot T^3+\ldots\,
$$
(where $S^k X:=X^k/S_k$ is the $k$-th-symmetric power of the variety $X$) modified, for fields of positive characteristic, 
in the described above way.
The Kapranov zeta-function possesses the property $\zeta_{\bL^s X}(T)=\zeta_{X}(\bL^sT)$. 
In \cite[Statement 3]{GLM04}  this property was proved for ${\mathbf{k}}=\C$. 
The proof is based on Statement~2 therein, for any integer
$s\ge0$, $\left(A(\bL^s T)\right)^{[X]}
= \left(A(T)^{[X]}\right)\mbox{\raisebox{-0.5ex}{$\vert$}}{}_{T\mapsto {\bL^s T}}$.
One considers the corresponding parts
$$
V=\left((
(\prod_i X^{k_i}) \setminus \Delta)
\times\prod_i A_i^{k_i}\right)/\prod_iS_{k_i}
$$
and 
$$\widetilde V=\left((
(\prod_i X^{k_i}) \setminus \Delta)
\times\prod_i (\bL^{si}\times A_i)^{k_i}\right)/\prod_iS_{k_i}
$$
of the coefficients of the series $\left(A(T)\right)^{[X]}$ and
$\left(A(\bL^s T)\right)^{[X]}$.
The  natural map $\widetilde V \to V$ is a Zariski locally trivial vector bundle of rank $sk$.
In \cite{GLM04} this is deduced from \cite{Serre}. For the base field ${\mathbf{k}}$ of positive characteristic 
one can use \cite[Section 7, Proposition 7]{Mumf}. 

The property $\zeta_{\bL^s X}(T)=\zeta_{X}(\bL^sT)$  permits to ``extend'' the pre-$\lambda$ structure to the localization
$K_0(\Var)[\bL^{-1}]$ of the Grothendieck ring $K_0(\Var)$ in the multiplicative set $\{\bL^n\}$.
Thus for an element $M\in K_0(\Var)[\bL^{-1}]$ its Kapranov zeta function is defined as 
$\zeta_M(T):=\zeta_{\bL^s M}(\bL^{-s}T)$  for $s$ large enough so that  $\bL^s M\in K_0(\Var)$.
In particular 
$$
\zeta_{\bL^{-s}}(T)=1+ \bL^{-s} \cdot T+ \bL^{-2s}\cdot T^2+ \bL^{-3s}\cdot T^3+\ldots=
\frac{1}{1-\bL^{-s} T}\, .
$$
In the Grothendieck ring $K_0(\Var)$ one has 
$[GL(n)]=(\bL^n-1)(\bL^n-\bL)\dots(\bL^n-\bL^{n-1})$.

Let $K_0(\St)$ be the Grothendieck group  of algebraic stacks of finite type over ${\mathbf{k}}$
all of whose automorphism group schemes are affine with  relations:
\begin{enumerate}
\item[1)]  $[X]$ depends only on the isomorphism class of $X$;
\item[2)] $[X]=[Y]+[U]$ if $Y$ is a closed substack of $X$ and $U$ is its complement;
\item[3)] if $E\to X$ is a vector bundle of constant rank $n$ then $[E]=\bL^n [X]$, where $\bL$
also denotes the class $[\AA^1_{\mathbf{k}}]\in K_0(\St)$ of the affine line.
\end{enumerate}
The multiplication in $K_0(\St)$ is defined by the fibred product of stacks: $[X_1]\cdot [X_2]=[X_1\times X_2]$.
The following property holds in $K_0(\St)$, see \cite{Ek:09-1}. 
Let $\Zar$ be the class of connected group schemes for which torsors 
(principal $G$-bundles in the \'etale topology)  
are trivial in the Zariski topology (e.g. $GL(n)$ and $SL(n)$ are in $\Zar$). 
For a group $G\in \Zar$, $G$ is affine and connected. 
For any group $G\in \Zar$, $[G][BG]=1$ in $K_0(\St)$ where $BG$ is the classifying  stack of the group $G$ (e.g 
$[BGL(1)]=\frac{1}{\bL-1}$).
Thus, in particular, $\bL$ and $\bL^n-1$ for any $n\geq 1$ are invertible in $K_0(\St)$.

There is a natural ring homomorphism from $K_0(\Var)$ to $K_0(\St)$ 
by considering an algebraic variety as an algebraic stack.
In \cite[Theorem 1.2]{Ek:09-1}(see also \cite{To05}), it was shown that the natural ring homomorphism 
\begin{equation}\label{isom}
K_0(\Var)[\bL^{-1}][(\bL^n-1)^{-1}, \forall n\geq 1] \longrightarrow K_0(\St)
\end{equation}
is an \emph{isomorphism} between the Grothendieck ring $K_0(\St)$ 
of algebraic stacks with affine stabilizers and the Grothendieck ring $K_0(\Var)$ of 
varieties localized by the elements $\bL:=[\AA^1_{\mathbf{k}}]$ and $\bL^n-1$,  for all $n\geq 1$. 
(The fact that these rings tensored by the field $\Q$ of rational numbers are isomorphic 
was proved in \cite{Jo07}). 

In \cite{Ek:09-2}, it was shown that there is a  pre-$\lambda$ structure on the ring $K_0(\St)$ 
(extending the one on $K_0(\Var)[\bL^{-1}]$). The proof there is implicit. 
Here we shall give somewhat more explicit formulae for this pre-$\lambda$ structure
(i.e. for the corresponding generalization of 
the Kapranov zeta function) and compute it for some examples. In particular we compute 
``symmetric powers'' of the classifying stack $BGL(1)$ of the group $GL(1)={\mathbf{k}}^*$.

Let $R_k(q_1,\dots,q_k)$ be the rational function in $q_1,\ldots,q_k$ defined by its Taylor expansion 
\begin{equation}\label{rat-funct}
 R_k(q_1,\dots,q_k):=\sum_{\underline{i}=(i_1,\ldots,i_k)\in \Z^k_{\geq 0}:\,\,
 i_s\ne i_r \,\,\textmd{for} \,\,s\ne r}  q_1^{i_1}  q_2^{i_2} \dots  q_k^{i_k} 
\end{equation} 

\noindent {\bf Fact 1.} One can see that 
\begin{equation*}
 R_k(q_1,\dots,q_k)=\sum_{\sigma\in S_k}  
\frac
{q_{\sigma(1)}^{k-1} q_{\sigma(2)}^{k-2}\dots
q_{\sigma(k-1)}}
{(1-q_{\sigma(1)})(1-q_{\sigma(1)}q_{\sigma(2)})\dots(1-q_{\sigma(1)}\dots q_{\sigma(k)})}.
\end{equation*} 
Here $\frac{q_{1}^{k-1} q_2^{k-2}\dots
q_{{k-1}}}{(1-q_{1})(1-q_{1}q_{2})\dots(1-q_{1}\dots q_{k})}$ is the part of the sum in (\ref{rat-funct}) over 
$\underline{i}=(i_1,\ldots,i_k)$ with $i_1>i_2>\ldots>i_k\geq 0$.

\noindent {\bf Fact 2.} For $\sum\limits_{j=1}^s n_j=k$, define
\begin{equation*}
R_{n_1,n_2,\ldots,n_s}(q_1,\ldots q_s):=\sum_{\{i_r^{(j)}\}}  q_1^{i_1^{(1)}} \dots q_1^{i_{n_1}^{(1)}}  
q_2^{i_1^{(2)}} \dots q_2^{i_{n_2}^{(2)}} \dots q_s^{i_1^{(s)}} \dots q_s^{i_{n_s}^{(s)}} ,
\end{equation*} 
where the sum is over all collections of $\{i_r^{(j)}\}$  with $1\leq j\leq s$, 
$1\leq r \leq n_j$, $i_1^{(j)}< i_2^{(j)} < \ldots < i_{n_j}^{(j)}$,
$i_{r_1}^{(j_1)}\neq  i_{r_2}^{(j_2)} $ for $(j_1,r_1)\ne (j_2,r_2)$. One can see that  
\begin{equation*}
R_{n_1,n_2,\ldots,n_s}(q_1,\ldots q_s)=R_k(\underbrace{q_1,\ldots,q_1}_{n_1},
\underbrace{q_2,\ldots,q_2}_{n_2},\ldots, \underbrace{q_s,\ldots,q_s}_{n_s})/\prod_{j=1}^s n_j ! .
\end{equation*} 

\medskip

Because of  the isomorphism (\ref{isom}), an element of the Grothendieck ring $K_0(\St)$ of stacks is of the form 
$a=\frac{M\bL^{-m}}{\prod_{i=1}^{\ell}(1-\bL^{-n_i})}$, where  $M \in K_0(\Var)$,
$m\in \Z$, $n_i\in \Z\setminus \{0\}$. (Without loss of generality one may assume that  
$m\in \Z_{\geq 0}$, $n_i\in \Z_{>0}$, however, for the formulae bellow this is not essential. ) 

In \cite[Lemma 2.2]{Ek:09-2}, the corresponding series $ \zeta_{a}(T)$, or rather 
$ (\zeta_{a}(-T))^{-1}(=\zeta_{-a}(-T))$, 
was described by a functional equation. A closed formula for   $ \zeta_{a}(T)$ is somewhat  involved. 
Therefore we shall give a formula for the Kapranov zeta function of an element $a$ of the form $ 
\frac{b\,\bL^{-m}}{1-\bL^{-n}}$, where $b\in K_0(\St)$ (in terms of the series  $ \zeta_{b}(T)$).
Let us denote the coefficients of the series $ \zeta_{b}(T)$ by $\sigma^k b\in K_0(\St):$
$$
\zeta_{b}(T):=1+\sum_{k=1}^\infty  \sigma^k b \,\,\cdot  T^k,
$$
$\sigma^1 b=b$. For $b=[X]$, where $X$ is quasi-projective variety, $\sigma ^k b$ is the class $[S^k X]$ 
of its symmetric power $S^k X$, (modified, for a field ${\mathbf{k}}$ of positive characteristic, in the described way).

\begin{remark} 
Note that, for  $\sum\limits_{j=1}^s n_j=k$ and for arbitrary $\ell_1,\ldots, \ell_s$, the corresponding element 
$R_{n_1,n_2,\ldots,n_s}((\bL^{-1})^{\ell_1},\ldots,(\bL^{-1})^{\ell_s})\in K_0(\St)$ 
is effective, in the sense that it is equal to the class of a quasiprojective variety divided by 
$\bL^r\prod_{i} [GL(r_i)]$. This follows directly form the formula for $R_k(q_1,\ldots,q_k)$.
\end{remark}

\begin{proposition}
For   an element $a=\frac{b q^{m}}{1-q^{n}}$, where $b\in K_0(\St)$ and $q=\bL^{-1}$, one has 
$$
\zeta_{a}(T)=1+\sum_{k=1}^\infty  \left(\sum_{\underline{k}:\sum\limits_{j=1}^s j k_j= k} 
R_{k_1,2 k_2,\ldots,s k_s}(q^{n},q^{2n},\ldots, q^{s n})  \prod_{j} (\sigma^j b)^{k_j}\right) 
 q^{km}  T^k,
$$
where the second sum runs over all partitions 
$\underline{k}=(k_1,\ldots,k_s): \sum\limits_{j=1}^s j k_j= k$ of the integer $k$.
\end{proposition}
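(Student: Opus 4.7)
The plan is to derive a functional equation for $\zeta_a(T)$ from the pre-$\lambda$ axioms, solve it uniquely as an infinite product, and then expand coefficient by coefficient.

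From $a(1-q^n) = bq^m$ one has $a = aq^n + bq^m$ in $K_0(\St)$. Additivity of the pre-$\lambda$ structure together with the extended scaling relation $\zeta_{q^s c}(T) = \zeta_c(q^s T)$ for $s\in\Z$ (valid on $K_0(\St)$ via the isomorphism (\ref{isom})) gives the functional equation
\[
\zeta_a(T) \;=\; \zeta_a(q^n T) \cdot \zeta_b(q^m T).
\]
Writing $\zeta_a(T) = 1 + \sum_{k\geq 1}\alpha_k T^k$ and comparing coefficients of $T^k$ produces the recursion $(1-q^{nk})\alpha_k = \sum_{j=1}^{k}\alpha_{k-j}\,q^{n(k-j)+mj}\,\sigma^j b$. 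Since $1-q^{nk}$ is invertible in $K_0(\St)$, this together with $\alpha_0 = 1$ determines $\zeta_a(T)$ uniquely. The infinite product $F(T) := \prod_{i\geq 0}\zeta_b(q^{m+in}T)$ also satisfies the same functional equation (the substitution $T\mapsto q^nT$ reindexes $i\mapsto i+1$, peeling off the $i=0$ factor) and the same initial condition, and each of its coefficients is a well-defined element of $K_0(\St)$ because the geometric-type sums in $q$ that arise resum, via Facts~1 and~2, to rational functions of the $q^{jn}$. Hence $\zeta_a(T) = F(T)$.

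Expanding $F(T)$, the coefficient of $T^k$ equals
\[
\sum_{(j_i):\, \sum j_i = k}\; \prod_i (\sigma^{j_i} b)\cdot q^{j_i(m+in)},
\]
where $(j_i)_{i\geq 0}$ ranges over finitely supported sequences of nonnegative integers. Grouping these by their type $\underline{k} = (k_1,\ldots,k_s)$ with $k_j = |\{i : j_i = j\}|$ and $\sum_j jk_j = k$, one has $\prod_i \sigma^{j_i}b = \prod_j (\sigma^j b)^{k_j}$, the overall $q^m$-contribution is $q^{km}$, and the positional factor reduces to a sum of $q^{n\sum_j j\sum_{i\in I_j} i}$ over tuples of pairwise disjoint subsets $I_j\subset\Z_{\geq 0}$ with $|I_j|=k_j$. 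Identifying this positional sum with the prescribed specialization of $R$ at $(q^n, q^{2n},\ldots, q^{sn})$, via the definition (\ref{rat-funct}) and the symmetrization identity in Fact~2, yields the closed form in the statement.

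The main obstacle is this final matching step: correctly identifying the combinatorial positional sum with the expression $R_{k_1,2k_2,\ldots,sk_s}(q^n,q^{2n},\ldots,q^{sn})$ appearing in the statement. The functional equation and its unique solution by infinite product are immediate from the pre-$\lambda$ axioms and the invertibility of $1-q^{nk}$, and the expansion into a sum over partitions is straightforward combinatorics; but lining up the subscripts $(k_1,2k_2,\ldots,sk_s)$ with the weights $q^{jn}$ requires invoking Fact~2 carefully, and this is where most of the technical care lies.
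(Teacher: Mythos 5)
Your overall route is the same as the paper's: derive the functional equation $\zeta_a(T)=\zeta_a(q^nT)\,\zeta_b(q^mT)$, note that it has a unique solution with constant term $1$, exhibit the infinite product $\prod_{i\ge 0}\zeta_b(q^{m+in}T)$ as a solution whose coefficients make sense in $K_0(\St)$, and expand. Your derivation of the functional equation from $a=aq^n+bq^m$ plus the scaling relation, and your uniqueness argument via the recursion $(1-q^{nk})\alpha_k=\sum_{j=1}^k\alpha_{k-j}q^{n(k-j)+mj}\sigma^jb$ with $1-q^{nk}$ invertible, are in fact somewhat more explicit than the paper's (which cites Ekedahl for the equation and argues uniqueness inside a formal power series ring in $q$). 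Up to and including the grouping of the product expansion by the type $\underline{k}$, your computation is correct and agrees with the paper's.

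The gap is precisely the step you postpone and describe as requiring ``most of the technical care.'' The positional sum you correctly arrive at runs over tuples of pairwise disjoint subsets $I_1,\dots,I_s\subset\Z_{\ge 0}$ with $|I_j|=k_j$, each $i\in I_j$ contributing $(q^{jn})^{i}$; by the very definition in Fact~2 this is $R_{k_1,k_2,\dots,k_s}(q^{n},q^{2n},\dots,q^{sn})$ --- $k_j$ indices in the $j$-th group --- and no further symmetrization is needed. It is \emph{not} equal to $R_{k_1,2k_2,\dots,sk_s}(q^{n},\dots,q^{sn})$, which by definition has $jk_j$ indices in the $j$-th group. Concretely, for $b=1$ and $k=2$ your expansion gives $R_{0,1}(q^n,q^{2n})+R_{2,0}(q^n,q^{2n})=\frac{1}{1-q^{2n}}+\frac{q^n}{(1-q^n)(1-q^{2n})}=\frac{1}{(1-q^n)(1-q^{2n})}$, in agreement with the paper's formula (\ref{zeta-some}), whereas the subscripts in the displayed statement would give $R_{0,2}(q^n,q^{2n})+R_{2,0}(q^n,q^{2n})=\frac{q^{2n}}{(1-q^{2n})(1-q^{4n})}+\frac{q^n}{(1-q^n)(1-q^{2n})}$, which is a different rational function. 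So the identification you assert ``via Fact~2'' would fail if actually carried out; what your computation proves is the Proposition with subscripts $(k_1,\dots,k_s)$. (The paper's own proof reaches the same expansion with $k_j$ indices per group and then asserts the statement, so the displayed subscripts appear to be a misprint; but since you singled out this matching as the crux of your argument, you needed either to perform it --- and discover the mismatch --- or to state explicitly which version of the closed form your expansion yields.)
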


\begin{corollary}
 For an element $a=\frac{1}{1-q^{n}}\in K_0(\St)$ and $q=\bL^{-1}$, one has 
$$
\zeta_{\frac{1}{1-q^{n}}}(T)=1+\sum_{k=1}^\infty  \left(\sum_{\underline{k}:\sum\limits_{j=1}^s j k_j= k} 
R_{k_1,2 k_2,\ldots,s k_s}(q^{n},q^{2n},\ldots, q^{s n})\right)  T^k,
$$
where the second sum runs over all partitions 
$\underline{k}=(k_1,\ldots,k_s): \sum\limits_{j=1}^s j k_j= k$ of $k$.
\end{corollary}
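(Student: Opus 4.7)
My plan is to reduce the proposition to a combinatorial identity on the rational functions $R_{\underline{k}}$ and then prove it by a ``shift'' bijection on the indices $\{i_r^{(j)}\}$. In $K_0(\St)$ the element $a=bq^m/(1-q^n)$ satisfies $a=bq^m+aq^n$. Combining additivity of the pre-$\lambda$ structure with the scaling property $\zeta_{q^s c}(T)=\zeta_c(q^s T)$ (extending to $K_0(\St)$ the one already available on $K_0(\Var)[\bL^{-1}]$), I obtain the functional equation
$$
\zeta_a(T)\;=\;\zeta_b(q^m T)\cdot \zeta_a(q^n T).
$$
Writing $\zeta_a(T)=1+\sum_{k\geq 1}A_k T^k$ and comparing coefficients of $T^k$ (using that the coefficient of $T^i$ in $\zeta_b(q^m T)$ equals $\sigma^i b\cdot q^{mi}$) yields the recursion
$$
A_k(1-q^{nk})\;=\;\sum_{i=1}^{k}\sigma^i b \cdot q^{mi+n(k-i)}\,A_{k-i}.
$$
Since $1-q^{nk}=1-\bL^{-nk}$ is invertible in $K_0(\St)$ for each $k\geq 1$, this recursion together with $A_0=1$ determines $\zeta_a(T)$ uniquely; it is therefore enough to check that the claimed closed form solves it.

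Substituting the proposed expression for $A_k$ and $A_{k-i}$ into the recursion and cancelling the common prefactor $q^{mk}\prod_v(\sigma^v b)^{k_v}$, the identity reduces, partition by partition, to
$$
(1-q^{nk})\, R_{\underline{k}}(q^n,q^{2n},\ldots)\;=\;\sum_{i:\,k_i\geq 1} q^{n(k-i)}\, R_{\underline{k}(i)}(q^n,q^{2n},\ldots),
$$
where $\underline{k}(i)$ denotes the tuple obtained from $\underline{k}$ by decreasing its $i$-th entry by one. By Fact~2, $R_{\underline{k}}(q^n,q^{2n},\ldots)$ equals $\sum_\phi q^{n\,w(\phi)}$, where $\phi=\{i_r^{(j)}\}$ runs over configurations of pairwise distinct non-negative integers, strictly increasing within each block, and $w(\phi)=\sum_{j,r} j\,i_r^{(j)}$.

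Adding $1$ to every index of $\phi$ is a bijection between all configurations and those whose minimal index is $\geq 1$, and it multiplies the weight by $q^{nk}$ (because the total $j$-weighted count of indices equals $k$). Hence $(1-q^{nk})R_{\underline{k}}$ equals the sub-sum over configurations that actually use the index $0$. Such configurations split by the unique block $i$ that contains $0$, which forces $k_i\geq 1$; deleting this $0$ and then shifting the remaining indices down by $1$ provides a bijection with the configurations counted by $R_{\underline{k}(i)}$. The main task is the weight bookkeeping: each of the $k_j$ indices in every block $j\neq i$ loses $j$ in weight, the $k_i-1$ remaining block-$i$ indices lose $i$ each, and the deleted $0$ contributed $i\cdot 0=0$, giving a total decrease of $\sum_j jk_j-i = k-i$. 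This produces the factor $q^{n(k-i)}$ in front of $R_{\underline{k}(i)}$, and summing over $i$ completes the proof.
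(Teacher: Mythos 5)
Your argument is correct, and it verifies the formula by a genuinely different mechanism than the paper. Both proofs rest on the same functional equation $\zeta_a(T)/\zeta_a(q^nT)=\zeta_b(q^mT)$ --- the paper quotes it from Ekedahl, while you re-derive it from $a=bq^m+aq^n$ together with additivity and the scaling property $\zeta_{q^sc}(T)=\zeta_c(q^sT)$, which is legitimate since Ekedahl's pre-$\lambda$ structure on $K_0(\St)$ extends the one on $K_0(\Var)[\bL^{-1}]$ and has this property. From there the paper exhibits the solution as the infinite product $\psi(T)=\prod_{i\ge 0}\zeta_b(q^{m+in}T)$, checks by telescoping that it satisfies the equation, and obtains the coefficient of $T^k$ by expanding the product: choosing degree $j$ from exactly $k_j$ of the factors yields precisely the sum over index collections $\{i_r^{(j)}\}$ defining $R_{k_1,2k_2,\dots,sk_s}(q^n,\dots,q^{sn})$. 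You instead convert the equation into the recursion $A_k(1-q^{nk})=\sum_{i=1}^k\sigma^i b\,q^{mi+n(k-i)}A_{k-i}$, note that invertibility of $1-\bL^{-nk}$ in $K_0(\St)$ makes the solution unique (this is cleaner and more explicit than the paper's appeal to Laurent expansions of polynomials in $q^{\pm1}$ and $(1-q^j)^{-1}$), and verify the closed form through the shift-and-delete bijection; your weight count $\sum_j jk_j-i=k-i$ and the resulting identity $(1-q^{nk})R_{\underline{k}}=\sum_{i:\,k_i\ge1}q^{n(k-i)}R_{\underline{k}(i)}$ are correct, and specializing $b=1$, $m=0$, $\sigma^j 1=1$ gives the Corollary as stated. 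What each approach buys: the paper's product form produces the formula without having to guess it, while your recursion gives an independent inductive check and makes the uniqueness step airtight. One caveat on notation: you read $R_{k_1,2k_2,\dots,sk_s}(q^n,\dots,q^{sn})$ as having $k_j$ indices in block $j$, each weighted by $q^{jn}$; a literal application of Fact 2 to the subscripts would put $jk_j$ indices in block $j$, which is inconsistent with (\ref{zeta-some}) already at $k=2$. Your reading is the one matching the displayed expansion in the paper's own proof, so the discrepancy lies in the paper's indexing convention, not in your argument.
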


\begin{proof}
 According to \cite{Ek:09-2},
 the series $\zeta_{a}(T)$ is defined by the following functional equation $\zeta_{a}(T)/\zeta_{a}(q^n T)= 
\zeta_b(q^m T)$ and the coefficients of this series are polynomials in 
$\sigma^i b$, $q^{\pm 1}$, and $(1-q^j)^{-1}$ for all $j>0$. Polynomials in 
$q^{\pm 1}$ and $(1-q^j)^{-1}$ can be defined by their Laurent expansion at $q=0$.
Consider the series in $T$, whose coefficients are elements  of $\Z[q^{\pm 1}, \sigma^i b]][[q]]$, of the form 
$$
\psi(T):=\prod_{i=0}^\infty \zeta_b(q^{m+in} T).
$$ 
One can see that 
$$
\psi(T)/\psi(q^nT)=
\prod_{i=0}^\infty \zeta_b(q^{m+in} T)(\prod_{i=0}^\infty \zeta_b(q^{m+(i+1)n} T))^{-1}=\zeta_b(q^m T).
$$
Also one can show (see bellow) that the coefficients of the series $\psi(T)$ are Laurent expansions in $q$ 
of polynomials  in $\sigma^k b$, $q^{\pm 1}$ and $(1-q^j)^{-1}$. Therefore  $\psi(T)=\zeta_{a}(T)$.

Thus one has 
$$ 
\zeta_{a}(T)=\prod_{i=1}^\infty (1+q^{m+in} \sigma^1 b \, T+q^{2(m+in)} \sigma^2 b \, T^2+\ldots).
$$
The coefficient of $T^k$ in this  series is the sum over all partitions $\underline{k}=(k_1,\ldots,k_s)$ of $k$
with $\sum\limits_{j=1}^s j k_j=k$ of the series 
\begin{equation*}
\prod_{j} (\sigma^j b)^{k_j} \sum_{\{i_r^{(j)}\}} q^{m+i_1^{(1)}n}\dots q^{m+i_{k_1}^{(1)}n} 
q^{m+2i_1^{(2)}n}\dots q^{m+2 i_{k_2}^{(2)}n} \ldots \ldots
q^{m+si_1^{(2)}n}\dots q^{m+s i_{k_s}^{(s)}n} 
\end{equation*}
where the sum  is over all collections of $\{i_r^{(j)}\}$  with $1\leq j\leq s$, 
$1\leq r \leq n_j$, $i_1^{(j)}< i_2^{(j)} < \ldots < i_{n_j}^{(j)}$,
$i_{r_1}^{(j_1)}\neq  i_{r_2}^{(j_2)} $ for $(j_1,r_1)\ne (j_2,r_2)$. This implies the statement.
\end{proof}

\begin{corollary}
The Kapranov zeta function for stacks is effective in the sense that, for a stack $X$, the coefficients of
$\zeta_{[X]}(T)$ are represented by classes of stacks in $K_0(\St)$.
\end{corollary}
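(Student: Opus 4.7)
The plan is to combine the Proposition and the Remark preceding it with a standard stratification of algebraic stacks, reducing the problem to the trivial base case of symmetric powers of varieties. The key observation is that every factor appearing in the formula of the Proposition is already shown to be represented by a stack; all that remains is to iterate.

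First, I stratify the stack $X$ into finitely many locally closed substacks each isomorphic to a quotient stack $[V_\alpha/G_\alpha]$ with $V_\alpha$ a quasi-projective variety and $G_\alpha\in\Zar$ (e.g.\ products of $GL(n)$'s and copies of the additive group $\AA^1$). Such a stratification exists for any algebraic stack of finite type with affine stabilizers and is the geometric content behind the isomorphism~(\ref{isom}). In $K_0(\St)$ this gives $[X]=\sum_\alpha [V_\alpha/G_\alpha]$, and the pre-$\lambda$ property $\zeta_{a+b}(T)=\zeta_a(T)\zeta_b(T)$ writes $\sigma^k[X]$ as a $\mathbb Z_{\ge 0}$-linear combination of products of classes $\sigma^{j}[V_\alpha/G_\alpha]$. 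Since disjoint unions and fibre products of stacks are again stacks, it suffices to prove effectiveness for a single $[V/G]$.

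Fix such a quotient and write $[G]=\bL^{a}\prod_{j=1}^{\ell}(\bL^{s_j}-1)$, so that
\[
[V/G]=[V]\,\bL^{-a-\sum_j s_j}\prod_{j=1}^{\ell}(1-\bL^{-s_j})^{-1}.
\]
I introduce auxiliary classes $c_0:=[V]$ and, for $\rho=1,\ldots,\ell$, $c_\rho:=c_{\rho-1}\,\bL^{-m_\rho}/(1-\bL^{-s_\rho})$, where the nonnegative integers $m_\rho$ are chosen so that $c_\ell=[V/G]$ (for instance $m_1=a+s_1$ and $m_\rho=s_\rho$ for $\rho\ge 2$). I would prove by induction on $\rho$ that every coefficient $\sigma^k c_\rho$ is represented by a stack. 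The base case $\sigma^k c_0=[S^k V]$ is a symmetric power of a variety (in positive characteristic, the modified $[S^k V]^*$, which is again a variety). For the inductive step, the Proposition, applied with $b=c_{\rho-1}$, $m=m_\rho$, $n=s_\rho$, expresses $\sigma^k c_\rho$ as a finite sum over partitions $(k_1,\ldots,k_s)$ with $\sum j k_j=k$ of terms
\[
R_{k_1,2k_2,\ldots,sk_s}\!\bigl(\bL^{-s_\rho},\bL^{-2s_\rho},\ldots,\bL^{-ss_\rho}\bigr)\cdot\prod_{j}(\sigma^j c_{\rho-1})^{k_j}\cdot\bL^{-km_\rho}.
\]
By the Remark, the first factor is represented by a stack (a product of a quasi-projective variety with a power of $B\AA^1$ and classifying stacks $BGL(r'_i)$); by the inductive hypothesis the middle factor is represented by a fibre product of stacks; and $\bL^{-km_\rho}$ is represented by $(B\AA^1)^{km_\rho}$. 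Summing and multiplying these in $K_0(\St)$ yields a stack whose class is $\sigma^k c_\rho$, closing the induction.

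The main obstacle is the geometric input in the first paragraph: the existence of a stratification of $X$ by quotient stacks $[V_\alpha/G_\alpha]$ with $G_\alpha\in\Zar$. This is a known result for algebraic stacks of finite type with affine stabilizers and underlies the isomorphism~(\ref{isom}) already cited. Once it is granted, the remainder of the argument is a purely formal combinatorial assembly of the Proposition, the Remark, and the closure of stack classes under disjoint union and fibre product.
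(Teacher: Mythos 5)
Your argument is correct, but it is not the route the paper takes. The paper disposes of this corollary in a single sentence by citing Ekedahl's identification of the coefficient $\sigma^k[X]$ with the class of the symmetric product stack $[X^k/S_k]$, which is itself an object of the category defining $K_0(\St)$, so effectiveness is immediate and no computation is needed. You instead reassemble effectiveness from the internal results of the paper: stratify $X$ by quotient stacks $[V_\alpha/G_\alpha]$ with $G_\alpha\in\Zar$ (Kresch's stratification theorem for finite-type stacks with affine stabilizers -- and you are right that this genuine geometric statement is needed, since surjectivity of the isomorphism (\ref{isom}) alone would only express $[X]$ as a $\Z$-linear combination, possibly with negative coefficients), use $\zeta_{a+b}=\zeta_a\zeta_b$ to reduce to a single stratum, and induct on the number of factors $(1-\bL^{-s_j})^{-1}$ in $1/[G]$, invoking the Proposition at each step and the Remark for the effectiveness of the $R$-factors. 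The bookkeeping checks out: your choice of $m_\rho$ does give $c_\ell=[V/G]$, each $s_\rho>0$ so the Proposition applies, $\bL^{-1}$ is the class of the classifying stack of the additive group, and sums and fibre products of stacks are stacks; the degenerate case $[G]=\bL^a$ with no $(\bL^{s}-1)$ factors is covered by $\zeta_{\bL^{-a}M}(T)=\zeta_M(\bL^{-a}T)$. What the paper's citation buys is a canonical geometric representative -- the symmetric product of the stack itself -- while your argument is self-contained relative to what is actually proved in the paper and produces an explicit, if uncanonical, effective representative; its only external input is the stratification theorem, which is the same input underlying (\ref{isom}), so nothing circular occurs.
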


This was shown by T. Ekedahl who identified the coefficients of $\zeta_{[X]}(T)$ with the classes of the
symmetric products of the stack $X$; \cite{Ek:09-2}.

\begin{example}
The Kapranov zeta function of the element $a=\frac{q^m}{1-q^{n}}\in K_0(\St)$ is equal to 
\begin{equation}\label{zeta-some}
\zeta_{\frac{q^m}{1-q^{n}}}(T)=1+\sum_{k=1}^\infty  \frac{q^{mk}}{(1-q^n)(1-q^{2n})\ldots(1-q^{kn})} T^k.
\end{equation}
In particular, 
\begin{equation}\label{zeta-gl1}
\zeta_{[BGL(1)]}(T)=\zeta_{\frac{1}{\bL-1}}(T)=1+\sum_{k=1}^\infty \frac{\bL^{k^2-k}}{(\bL^k-\bL^{k-1})
\ldots(\bL^k-1)} T^k.
\end{equation}
Thus 
\begin{equation}\label{sym-gl1}
\sigma^k[BGL(1)]= \bL^{k^2-k} [BGL(k)]. 
\end{equation}
As in the proof of the proposition with $b=1$ consider  
$$\psi(T)=1+\sum_{k=1}^\infty  \frac{q^{mk}}{(1-q^n)(1-q^{2n})\ldots(1-q^{kn})} T^k$$
and we have to show that the following functional equation $\psi(T)/\psi(q^nT)=\zeta_1(q^m T)$ holds. 
Thus the equation (\ref{zeta-some}) follows from the equalities 
$$
\left(1+\sum_{k=1}^\infty  \frac{q^{(m+n)k}}{(1-q^n)(1-q^{2n})\ldots(1-q^{kn})} T^k\right)
\left( \sum_{k=0}^\infty q^{mk} T^k \right)=
$$
$$
=1+\sum_{k=1}^\infty \sum_{j=0}^k \frac{q^{(m+n)j+m(k-j)}}{(1-q^n)(1-q^{2n})\ldots(1-q^{jn})} T^k =
$$
$$
=
1+\sum_{k=1}^\infty \sum_{j=0}^k \frac{q^{nj}}{(1-q^n)(1-q^{2n})\ldots(1-q^{jn})} q^{mk}T^k= 
1+\sum_{k=1}^\infty  \frac{q^{mk}}{(1-q^n)(1-q^{2n})\ldots(1-q^{kn})} T^k
$$

\end{example}

\begin{remark}
Notice that the classifying space for the group $GL(1)=\C^*$ 
in the topological sense is $\C\P^\infty$, whence the topological 
classifing space for the group $GL(k)$ is the (infinite dimensional) Grassmannian 
$\Gr(k,\infty)$. If one defines $
\zeta_{\C\P^\infty}(T)$ as 
$$
\zeta_{\C\P^\infty}(T):=\lim_{N \to \infty} \zeta_{\C\P^N}(T)=\prod_{j=0}^\infty \zeta_{\bL^j}(T)
$$
(whatever this means) and  $[\Gr(k,\infty)]$ as a series in $\bL$ equal to  $[\Gr(k,\infty)]=
\lim_{N \to \infty}  [\Gr(k,N)]$ one gets 
\begin{equation}\label{cpinfinity}
\zeta_{\C\P^\infty}(T)=1+\sum_{k=1}^\infty [\Gr(k,\infty)] T^k ,
\end{equation}
which is similar to (\ref{sym-gl1}) up to a dimensional factor $\bL$ in some power.
The equation (\ref{cpinfinity}) means that $[S^k \C\P^\infty]=[\Gr(k,\infty)]$. Moreover 
$\C\P^\infty$ and $\Gr(k,\infty)$ have decompositons into quasi-projective varieties (compatible with the inclusions 
$S^k \C\P^N \subset S^k \C\P^\infty$  and $\Gr(k,N)\subset \Gr(k,\infty)$) such that components of $S^k \C\P^\infty$
and of $\Gr(k,\infty)$ are pair-wise isomorphic. This uses the fact  that $S^k \C^n$ and $\C^{kn}$ have decompositions 
into quasi-projective varieties such that their components are pair-wise isomorphic.
See a proof of these statements in \cite{piecewise}.
\end{remark}
 
\section{A power structure over $ K_0(\St)$}
\begin{definition}
A {\em power structure} over a (semi)ring $R$ with a unit is a map
$\left(1+T\cdot R[[T]]\right)\times {R} \to 1+T\cdot R[[T]]$:
$(A(T),m)\mapsto \left(A(T)\right)^{m}$,
which possesses the following properties:
\begin{enumerate}
\item[1)] $\left(A(T)\right)^0=1$,
\item[2)] $\left(A(T)\right)^1=A(T)$,
\item[3)] $\left(A(T)\cdot B(T)\right)^{m}=\left(A(T)\right)^{m}\cdot
\left(B(T)\right)^{m}$,
\item[4)] $\left(A(T)\right)^{m+n}=\left(A(T)\right)^{m}\cdot
\left(A(T)\right)^{n}$,
\item[5)] $\left(A(T)\right)^{mn}=\left(\left(A(T)\right)^{n}\right)^{m}$,
\item[6)] $(1+T)^m=1+mT+$ terms of higher degree,
\item[7)] $\left(A(T^k)\right)^m =
\left(A(T)\right)^m\raisebox{-0.5ex}{$\vert$}{}_{T\mapsto T^k}$, $k\geq 1$.
\end{enumerate}
\end{definition}

As it was explained in \cite{GLM06},
a pre-$\lambda$ structure $\lambda_a(T)\in 1+T\cdot R[[T]]$ on a ring $R$ defines a power structure over it. 
To define the series 
$(A(T))^m$ for $A(T)=1+a_1T+a_2T^2+\ldots$, $a_i\in R$,   $m\in R$ one has to represent in a unique way  
$A(T)$ as a product of the form
$A(T)=\prod\limits_{k=1}^\infty \lambda_{b_k}(T^k)$, with $b_i\in R$
and then
\begin{equation}\label{eq1}
\left(A(T)\right)^m:=\prod\limits_{k=1}^\infty \lambda_{mb_k}(T^k).
\end{equation}

To each pre-$\lambda$ structure $\lambda_a(T)$ on a ring $R$, there corresponds 
the so called \emph{opposite} pre-$\lambda$ structure $\lambda^{op}_a(T):=(\lambda_a(-T))^{-1}$.
Notice that the power structures defined by these two pre-$\lambda$-structures are in general different.

One can easily see this fact for the pre-$\lambda$ structure on the polynomial ring
$\Z[u_1,\ldots,u_k]$ described in \cite{GLM06}.

One can say that there are at least 4 ``natural'' pre-$\lambda$ structures
over the Grothendieck ring $K_0(\Var)$ of quasi-projective varieties. They are 
those defined by the Kapranov zeta function 
$$
\zeta_{M}(T):=1+[S^1 M]\cdot T+[S^2 X]\cdot T^2+[S^3 M]\cdot T^3+\ldots\,,
$$
and by the generating series of the configuration space $M_k=(M^k\setminus \Delta)/S_k$ 
of unordered $k$-tuples of diferent points of a quasi-projevtive variety  $M$:
$$
\varphi_M(T):=1+[M]\cdot T+[M_2]\cdot T^2+[M_3]\cdot T^3+\ldots\,,
$$
and their corresponding opposites.

One can see that the power structure corresponding to the first two
pre-$\lambda$ structures coincide (this is a consequence of the equation 
 $\varphi_M(T)=(1+T)^M$ see Example in \cite{GLM04}, where in this equality 
the power estructure is defined by  
$\zeta_{M}(T)$), whence those corresponding to their 
opposites are different from this one (and coincide with each other).
The advantage of the power structure defined by 
the Kapranov zeta function (or by the series  $\varphi_M(T)$) is the fact that it is defined 
over the Grothendieck semiring of quasi-projective
varieties (whose elements are represented by ``genuine'' varieties not by virtual ones). We shall 
say that this power structure is \emph{effective}.  
This follows from the mentioned above geometric description of the coefficient of the series 
$(A(T))^M$ for $A(T)=1+[A_1]\cdot T+[A_2]\cdot T^2+[A_3]\cdot T^3+\ldots\,,$
and $A_k$ and $M$ are quasiprojective varieties.
We do not know how one can prove effectivness of this power structure from 
the definition via $\varphi_M(T)$. With their opposite power structure one has 
 
\noindent {\bf Statement 1.} The power structure over the Grothendieck ring $K_0(\Var)$ 
defined by the pre-$\lambda$ structure $\lambda_X(T):=(\zeta_{X}(-T))^{-1}$ is not effective.

Fon a class $[M]$ of a quasi-projective variety $M$,   $\lambda_{[M]}(T)$ is the inverse of the series
$(1-[M]T+[S^2M]T^2-[S^3M]T+\ldots)$, that is  $\lambda_{[M]}(T)=1+[M]T+([M]^2-[S^2M])T^2+\ldots$.
If $M$  is smooth elliptic curve $C$, then the Hodge-Deligne polynomial $e([C]^2-[S^2C])$ is equal to
$-u^2v-uv^2+u^2+v^2+2uv-u-v$, 
since $e([C])=1-u-v+uv$ and $e([S^2C])$ is the coefficient at $T^2$ in the series 
$\frac{1}{1-T}(1-uT)(1-vT)\frac{1}{1+uvT}$. Such polynomial cannot be the 
Hodge-Deligne polynomial of a quasi-projective variety since its 
homogeneous part of highest degree is not of the form $\ell (uv)^n$, $\ell$ a non-negative  integer. 
 
\noindent {\bf Statement 2.} The power structure over the Grothendieck ring $K_0(\St)$ 
defined by the the Kapranov zeta function is not effective.

We shall show that the series $(1+T)^{[BGL(1)]}=(1+T)^{\frac{1}{\bL-1}}$ is not effective,
in the sense that it contains non-effective coeficientes. Namely we shall show that the second coefficient
of such a series in not effective. One has: 
$$
(1+T)^{\frac{1}{\bL-1}}=\frac{(1-T)^{-\frac{1}{\bL-1}}}{(1-T^2)^{-\frac{1}{\bL-1}}}=
\frac{\zeta_{\frac{1}{\bL-1}}(T)}{\zeta_{\frac{1}{\bL-1}}(T^2)}\,.
$$ 
According to (\ref{zeta-gl1}) 
$$
(1+T)^{\frac{1}{\bL-1}} \mod T^3=\left(1+\frac{1}{\bL-1} T+ \frac{\bL^2}{(\bL^2-1)(\bL^2-\bL)} T^2\right)
\left(1-\frac{1}{\bL-1} T^2\right) \mod T^3 .
$$
The coefficient at $T^2$ is equal to 
$$
\frac{\bL^2}{(\bL^2-1)(\bL^2-\bL)}-\frac{1}{\bL-1}=\frac{-\bL^3+\bL^2+\bL}{[GL(2)]}\, .
$$

It is not effective since the term of highest degreee in the Hodge-Deligne polynomial of the 
numerator has negative coefficient. 

For a quasi-projective variety $X$, the coefficient at $T^n$ in the series $(1+T)^{[X]}$
is represented by the configuration space $X^n\setminus\Delta/S^n$ of $n$ different points in $X$.
Statement 2 gives a hint for the conjecture that the notion of the configuration spaces of different points
cannot be defined for stacks (at least in a form close to that for varieties).

\end{document}